\documentclass[11pt]{article}
\usepackage{amsfonts,mathrsfs,amssymb,amsthm}
\usepackage{amsmath,amscd}
\usepackage{mathptm,pslatex}
\usepackage{url}
\usepackage{cite}
\usepackage{exscale}
\usepackage{relsize}
\usepackage[all]{xy}
\usepackage{color}

\begin{document}

\newtheorem{Def}{Definition}[section]
\newtheorem{Bsp}[Def]{Example}
\newtheorem{Prop}[Def]{Proposition}
\newtheorem{Theo}[Def]{Theorem}
\newtheorem{Lem}[Def]{Lemma}
\newtheorem{Koro}[Def]{Corollary}
\theoremstyle{definition}
\newtheorem{Rem}[Def]{Remark}

\newcommand{\rad}{{\rm rad}}
\newcommand{\pd}{{\rm projdim}}
\newcommand{\Db}[1]{{\mathscr D}^b(#1)}
\newcommand{\Kb}[1]{{\mathscr K}^b(#1)}
\newcommand{\Dsg}[1]{{\mathscr D}_{\rm sg}(#1)}
\newcommand{\modcat}[1]{#1\mbox{{\rm -mod}}}
\newcommand{\stmodcat}[1]{#1\mbox{{\rm -{\underline{mod}}}}}
\newcommand{\pmodcat}[1]{#1\mbox{{\rm -proj}}}
\newcommand{\Hom}{{\rm Hom}}
\newcommand{\Ext}{{\rm Ext}}
\newcommand{\Ker}{{\rm Ker}}
\newcommand{\soc}{{\rm soc}}
\newcommand{\Coker}{\mathop{\rm Coker}}
\newcommand{\StHom}{{\rm \underline{Hom}}}
\newcommand{\ra}{\rightarrow}
\newcommand{\raf}[1]{\stackrel{#1}{\ra}}
\newcommand{\lra}{\longrightarrow}
\newcommand{\lraf}[1]{\stackrel{#1}{\lra}}

{\Large \bf
\begin{center}
Examples of nontrivial presilting objects in a singularity category
\end{center}
}

\medskip
\centerline{\textbf{Changchang Xi and Jinbi Zhang$^*$}}

\renewcommand{\thefootnote}{\alph{footnote}}
\setcounter{footnote}{-1}
\footnote{$^*$Corresponding author's Email: zhangjb@ahu.edu.cn}
\renewcommand{\thefootnote}{\alph{footnote}}
\setcounter{footnote}{-1}
\footnote{2020 Mathematics Subject Classification: Primary 16E05, 18G80;
Secondary 16G10.}
\renewcommand{\thefootnote}{\alph{footnote}}
\setcounter{footnote}{-1}
\footnote{Keywords: Singular presilting conjecture; Singularity category; Presilting object; Syzygy.}

\begin{abstract}
It was conjectured that no presilting objects exist in the singularity category of a finite-dimensional algebra over a field except the zero object. In this note, we give counterexamples to this conjecture for algebras over fields of positive characteristic.
\end{abstract}

\section{Introduction}

For an Artin algebra $A$, Auslander--Reiten formulated the following conjecture in \cite{AuslanderReiten1975}:

(ARC) A finitely generated $A$-module $M$ with $\Ext_A^i(M,M\oplus A)=0$ for all $i>0$ is projective.

This conjecture is not yet solved. When we pass to the derived category $\Db{A}$ of $A$, the condition in (ARC) means that $\Hom_{\Db{A}}(A\oplus M,(A\oplus M)[i])=0$ for all $i>0$. Thus $A\oplus M$ is a presilting object in $\Db{A}$. Here, an object $X$ of a triangulated category $\mathcal{T}$ is \emph{presilting} if
$\Hom_{\mathcal{T}}(X,X[d])=0$ for all $d>0$. Furthermore, when we consider the singularity category $\Dsg{A}$ of $A$, which is, by definition, the Verdier quotient of $\Db{A}$ by the bounded homotopy category of finitely generated projective $A$-modules (see \cite{buchweitz87,orlov04}), then finitely generated projective $A$-modules are $0$ in $\Dsg{A}$. Based on this background, the following so-called singular presilting conjecture was naturally proposed in \cite[Section~1]{ChenHuQinWang2023}.

\medskip
(SPC) For an Artin algebra $A$, $\Dsg{A}$ does not contain any nonzero presilting objects.

\medskip
Trivially, (SPC) implies (ARC). It was shown in \cite[Theorem~2.8 and Propositions~3.4--3.5]{ChenLiZhangZhao2024}
that (SPC) holds for singularly-minimal and ultimately-closed algebras.
Whether (SPC) is true seems not to be studied in general.

The purpose of the present note is to show that (SPC) is not true in general by constructing an explicit counterexample.

\smallskip
Let $k$ be a field and $K$ a field extension of $k$ such that $K$ contains a transcendental element $t$ over $k$.
Let $k[T]$ be the polynomial algebra over $k$ in one variable $T$ and $F(T)=\sum_{i=0}^nc_iT^i\in k[T]$ with  $n\ge 2$ and $c_n\ne 0$.

We define a $K$-algebra
$A_F$ depending upon $F$ as follows.

Let $A_F:=K\langle x_0,\ldots,x_n\rangle/I_F$ be the quotient of the free $K$-algebra $K\langle x_0,\ldots,x_n\rangle$ of rank $n+1$ in variables $x_0, x_1,\cdots, x_n$, modulo the ideal $I_F$ generated by
$$
 \begin{gathered}
 c_nx_ix_j-c_jx_ix_n
 \quad(0\le i<n,\ 1\le j<n),
 \qquad
 x_nx_j
 \quad(1\le j\le n),\\
 c_nx_0^2-c_0x_0x_n,
 \qquad
 c_nx_ix_0-c_0x_ix_n+x_{i-1}x_n
 \quad(1\le i\le n).
 \end{gathered}
$$
Then $A_F$ is a finite-dimensional local $K$-algebra  and $\rad^3(A_F)=0$. Moreover, $A_F$ is of dimension $2n+2$ with a $K$-basis
$$\{1, x_0, x_1, \cdots, x_n,x_0 x_n, x_1x_n, \cdots, x_{n-1}x_n\}.$$ In fact, since the relations are quadratic, we only need to prove that $\{x_0 x_n, x_1x_n, \cdots, x_{n-1}x_n\}$ is $K$-linearly independent. Suppose that there are $\lambda_0,\cdots, \lambda_{n-1}\in K$ such that $\sum_{i=0}^{n-1}\lambda_ix_ix_n=0$ in $A_F$, that is, $\sum_{i=0}^{n-1}\lambda_ix_ix_n\in I_F$ in $K\langle x_0,\ldots,x_n\rangle$. We show $\lambda_i=0$ for all $0\le i< n$.
Note that $K\langle x_0,\ldots,x_n\rangle$ is a graded $K$-algebra with homogeneous $i$-th component being spanned by all monomials of degree $i$ for $i\ge 0$. Every
element of $I_F$ is a finite sum of elements of the form $urv$, where
$u,v\in K\langle x_0,\ldots,x_n\rangle$ and $r$ is one of the defining
relations. If either $u$ or $v$
is homogeneous of positive degree, then $urv$ has degree at least
three. By decomposing $u$ and $v$ into their homogeneous components, one knows that the degree-two part of $urv$ comes only from their constant terms.
It follows that every homogeneous element of degree two in $I_F$ is a
$K$-linear combination of the defining relations.
So we may write
$$
\begin{aligned}
 \sum_{i=0}^{n-1}\lambda_ix_ix_n
 ={}&\sum_{\substack{0\le i<n\\1\le j<n}}
 \alpha_{ij}(c_nx_ix_j-c_jx_ix_n)
 +\sum_{j=1}^n\beta_jx_nx_j\\
 &+\gamma(c_nx_0^2-c_0x_0x_n)
 +\sum_{i=1}^n\delta_i
 (c_nx_ix_0-c_0x_ix_n+x_{i-1}x_n),
\end{aligned}
$$
where $\alpha_{ij}, \beta_j, \delta_i$ and $\gamma$ lie in $K$.
By comparing the coefficients of $x_ix_j$, $x_nx_j$, $x_0^2$, and
$x_ix_0$ of both sides, we get $c_n\alpha_{ij}=0$, $\beta_j=0$,
$c_n\gamma=0$, and $c_n\delta_i=0$. Due to $c_n\ne0$, we see
$\alpha_{ij}=\beta_j=\delta_i=\gamma=0$ for all $i,j$. Therefore $\lambda_i=0$ for all $0\le i<n$. Thus $\{x_0 x_n, x_1x_n, \cdots, x_{n-1}x_n\}$ is a $K$-linearly independent system.

Since $A_F$ is a local algebra, it is easy to see that a finitely generated $A_F$-module is of finite projective dimension if and only if it is a free $A_F$-module of finite rank.

For $\lambda\in K$, we define
$h_\lambda:=\sum_{i=0}^n\lambda^ix_i\in A_F$,
$N_{F,\lambda}:=A_F/(A_Fh_\lambda)$, and
$M_F:=N_{F,t}=A_F/(A_Fh_t)$.

\begin{Theo}\label{thm-main}
The module $M_F$ is a nonzero presilting object in $\Dsg{A_F}$ if and only if $F'(T)=0$, where $F'(T)$ denotes the derivative of $F(T)$.

\end{Theo}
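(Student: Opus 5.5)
The plan is to reduce the statement to explicit syzygy and stable-Hom computations for the family of cyclic modules $N_{F,\lambda}$. I will use the normal form of products in $A_F$ given by the defining relations.

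\textbf{Step 1: normal form of products.} Put $y_i:=x_ix_n$ for $0\le i<n$, and set $y_{-1}:=0$. For $0\le i<n$ the relations give
$$x_ix_j=\tfrac{c_j}{c_n}y_i\ (1\le j<n),\qquad x_ix_0=\tfrac{1}{c_n}(c_0y_i-y_{i-1}).$$
For $i=n$ they give $x_nx_j=0$ for $j\ge1$ and $x_nx_0=-\tfrac1{c_n}y_{n-1}$.

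\textbf{Step 2: the syzygy of $N_{F,\lambda}$.} From Step 1, for $0\le i<n$ one gets $x_ih_\lambda=\frac1{c_n}\bigl(F(\lambda)y_i-y_{i-1}\bigr)$, and also $x_nh_\lambda=-\frac1{c_n}y_{n-1}$. Hence for $a=\sum a_ix_i$,
$$a\,h_\lambda=\tfrac1{c_n}\sum_{i<n}\bigl(F(\lambda)a_i-a_{i+1}\bigr)y_i .$$
The left annihilator of $h_\lambda$ is therefore spanned by $h_{F(\lambda)}$ and $\rad^2(A_F)$. The left ideal $A_Fh_\mu$ has dimension $n+1$ and is spanned by $h_\mu$ together with $n$ independent combinations of the $y_i$, so it contains $\rad^2(A_F)$. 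Comparing dimensions then gives
$$\Omega(N_{F,\lambda})\cong A_Fh_\lambda\cong A_F/A_Fh_{F(\lambda)}=N_{F,F(\lambda)}.$$
Iterating, $\Omega^m(M_F)\cong N_{F,F^{m}(t)}$, where $F^m$ is the $m$-fold composite. Because $t$ is transcendental and $\deg F\ge2$, the elements $F^m(t)$ are pairwise distinct and transcendental. No $N_{F,\lambda}$ is free, so $M_F\neq0$ in $\Dsg{A_F}$; this uses the remark in the paper that finite projective dimension means free.

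\textbf{Step 3: reduce presilting to stable Homs.} For $d>0$ I will use the standard description
$$\Hom_{\Dsg{A}}(M,M[d])=\varinjlim_m\StHom_A(\Omega^{m+d}M,\Omega^mM).$$
Presilting is then equivalent to the vanishing of the colimits of
$$\StHom\bigl(N_{F,\mu},N_{F,\lambda}\bigr),\qquad \mu=F^{m+d}(t),\ \lambda=F^m(t).$$
A homomorphism $N_{F,\mu}\to N_{F,\lambda}$ is given by right multiplication by some $z\in A_F$ with $h_\mu z\in A_Fh_\lambda$. It factors through a projective if and only if $z$ lies in the sum of $A_Fh_\lambda$ and a suitable subspace. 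Since $\rad^3=0$, one can take $z=z_0+\sum z_ix_i+(\text{terms in }\rad^2)$. I will then write out the linear conditions using the formulas of Step 2 and determine $\StHom(N_{F,\mu},N_{F,\lambda})$ in terms of $\mu$, $\lambda$, $F(\mu)$ and $F(\lambda)$. I also need the connecting maps of the colimit, which come from applying $\Omega$; a morphism given by $z$ should go to one given by an analogous element for $F(\mu)$ and $F(\lambda)$.

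\textbf{Main obstacle.} The key and hardest step is this stable Hom computation together with its behaviour under $\Omega$. I expect a candidate nonzero class coming from a ``difference quotient'' element such as $\sum_i i\lambda^{i-1}x_i=\partial h_\lambda/\partial\lambda$. Its compatibility with the $\Omega$-maps should produce a factor of $F'(\lambda)$, because $\Omega$ replaces $\lambda$ by $F(\lambda)$ and by the chain rule $\frac{d}{d\lambda}h_{F(\lambda)}=F'(\lambda)\,\partial h$. The intended dichotomy is as follows.
\begin{enumerate}
\item If $F'=0$, the transition maps kill every candidate class, and for $\mu\ne\lambda$ there is nothing else; so all colimits vanish and $M_F$ is presilting.
\item If $F'\neq0$, then $F'(F^m(t))\neq0$ by transcendence. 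Some class, for suitable $d$, should then survive in every term of the colimit, giving $\Hom_{\Dsg{A_F}}(M_F,M_F[d])\ne0$.
\end{enumerate}
I would first test this with $n=2$ to pin down exactly which $z$ give non-split maps, and then generalize.
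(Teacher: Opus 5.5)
Your Steps 1--3 are correct and follow the paper's own set-up: $\Omega(N_\lambda)\cong N_{F(\lambda)}$, $\Omega^m(M_F)\cong N_{t_m}$ with the $t_m=F^{m}(t)$ pairwise distinct and transcendental, and $\Hom_{\Dsg{A_F}}(M_F,M_F[d])\cong\varinjlim_m\StHom_{A_F}(N_{t_{m+d}},N_{t_m})$. The gap is that you stop where the real work begins. The stable Hom spaces, the relevant shift $d$ and the transition maps are only predicted (``should produce'', ``for suitable $d$''), so neither implication is proved.

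The first missing piece is the stable Hom computation. Write $w_i=c_n^{-1}x_ix_n$ (your $y_i/c_n$). For $r=\sum_i r_ix_i\in E=\bigoplus_i Kx_i$ put $\mu_a(r)=\sum_i r_ic_i-ar_0$; then $h_ar=\mu_a(r)\sum_{i<n}a^iw_i$.
\begin{itemize}
\item For $a\neq b$, every homomorphism $N_a\to N_b$ is right multiplication $f_r$ by some $r\in E$, unique modulo $Kh_b$. A scalar component is excluded because $\overline{h_a}\neq0$ in $N_b$.
\item $f_r$ factors through a projective iff $h_ar=0$, i.e.\ iff $\mu_a(r)=0$.
\item Since $\mu_a(h_b)=F(b)-a$ and $\mu_a(x_n)=c_n\neq0$, one gets $\StHom_{A_F}(N_a,N_b)\cong K/(F(b)-a)K$. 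This is $K$ if $a=F(b)$ and $0$ otherwise.
\end{itemize}
Hence all terms vanish for $d\ge2$ (as $t_{m+d}\neq t_{m+1}$), and only $d=1$ matters. Your remark that ``for $\mu\neq\lambda$ there is nothing else'' is not an argument for this; what decides is whether $\mu=F(\lambda)$.

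The second missing piece is the transition map for $d=1$. Lift $f_r\colon N_{t_m}\to N_{t_{m-1}}$ to right multiplication by $r$ on $A_F$. Then $\Omega(f_r)=f_s$ with $sh_{t_{m-1}}=h_{t_m}r=\mu_{t_m}(r)\sum_i t_m^iw_i$. Your formula for $yh_\lambda$ gives $s=-\mu_{t_m}(r)\,\partial h_{t_m}$, where $\partial h_\lambda=\sum_{i\ge1}i\lambda^{i-1}x_i$. Since $\mu_{t_{m+1}}(\partial h_{t_m})=F'(t_m)$, the transition map (each term identified with $K$ via $\mu$) is multiplication by $-F'(t_m)$. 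It is zero for all $m$ if $F'=0$, and an isomorphism for all $m$ if $F'\neq0$ by transcendence of $t_m$. This gives the dichotomy.

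This is where your chain-rule heuristic is realized, but not in the form you set up. Your candidate class $\partial h_\lambda\in\StHom_{A_F}(N_{F(\lambda)},N_\lambda)$ has $\mu_{F(\lambda)}(\partial h_\lambda)=F'(\lambda)$. So it is already stably zero when $F'=0$, while that space is still one-dimensional, spanned e.g.\ by $f_{x_0}$ since $\mu_{t_m}(x_0)=c_0-t_m\neq0$. The derivative element arises as the $\Omega$-image of a generator, not as the generator itself. In the case $F'=0$ you must show that this nonzero generator is killed by the transition maps. The paper does exactly this with the basis $f_{x_0}$, obtaining the scalar $\gamma_m=-(t_m-c_0)F'(t_m)/(F(t_m)-c_0)$.
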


In particular, for every prime $p>0$, let $k=\mathbb F_p:=\mathbb{Z}/p\mathbb{Z}$,
$F(T)=T^p\in k[T]$, and let $K:=k(t)$ be the field of fractions of the polynomial algebra $\mathbb F_p[t]$ in one variable $t$. Then
$F'(T)=0$, and hence (SPC) fails for $A_F$. However,
(ARC) holds for $A_F$ by \cite[Theorem 1.1]{Xu2013}.

\section{Proof of Theorem \ref{thm-main}}\label{sect2}
Before we start the proof of  Theorem \ref{thm-main}, we introduce a few notions and notation.

Let $A$ be a finite-dimensional algebra over a field $k$. We denote by
$\modcat{A}$ the category of finitely generated left
$A$-modules, and by $\pmodcat{A}$ the full subcategory of $A$-mod consisting of all projective
modules.

The stable category of $A$ is denoted by $\stmodcat{A}$. It
has the same objects as $\modcat{A}$, and its morphism space
$\StHom_A(X,Y)$ is the quotient of $\Hom_A(X,Y)$ by the subspace of
homomorphisms which factorize through an object in $\pmodcat{A}$. For
$f\in\Hom_A(X,Y)$, we write $\underline f$ for its image in
$\StHom_A(X,Y)$. We denote by $\Omega_A$ the syzygy functor on
$\stmodcat{A}$.

The composition of two morphisms in a category is written from left to right.

In the rest of this note, we fix a polynomial $F(T)\in k[T]$ and write $N_{\lambda}$ for $N_{F,\lambda}$.  In the following, we give some properties of $A_F$ and $M_F$.

For $0\le i<n$, set
$w_i:=c_n^{-1}x_ix_n$, $E:=\bigoplus_{i=0}^nKx_i$ and
$W:=\bigoplus_{i=0}^{n-1}Kw_i$. Let $w_{-1}=w_n=0$. Then

\medskip
(1) $A_F=K1\oplus E\oplus W \mbox{  and } \rad(A_F)=E\oplus W $ as direct sums of $K$-spaces, $EW=WE=W^2=0, \rad^2(A_F)=W \mbox{ and } \rad^3(A_F)=0.$
The only nonzero products between the
displayed basis elements of $E$ are
$$
 x_ix_j=c_jw_i
 \quad(0\le i<n,\ 1\le j\le n),
 \qquad
 x_ix_0=c_0w_i-w_{i-1}
 \quad(0\le i\le n).
$$

\medskip
(2) $Eh_\lambda=W$, $\{y\in E\mid yh_\lambda=0\}=Kh_{F(\lambda)}$, $A_Fh_\lambda=Kh_\lambda\oplus W, \mbox{ and } Kh_{F(\lambda)}\oplus W
=A_Fh_{F(\lambda)}$.

\medskip
This follows from the following equalities $(\star)$ and the fact $(\star')$:
For
$y=\sum_{i=0}^ny_ix_i\in E$ with $y_i\in K$, we have
\begin{equation*}
 yh_\lambda=(\sum_{i=0}^ny_ix_i)(\sum_{i=0}^n\lambda^ix_i)
 =\sum_{i=0}^{n-1}\bigl(F(\lambda)y_i-y_{i+1}\bigr)w_i.
 \tag{$\star$}
\end{equation*}

$(\star')\;$ If
$a=\alpha+y+w\in K1\oplus E\oplus W$, then $a h_\lambda=\alpha h_\lambda+yh_\lambda.$

\medskip
Since we have $E\cap W=0$, the equality $ah_\lambda=0$ if and only if $\alpha=0$ and
$y\in Kh_{F(\lambda)}$. Consequently, we have (3):

\medskip
(3) $\{a\in A_F\mid a h_\lambda=0\}=Kh_{F(\lambda)}\oplus W
=A_Fh_{F(\lambda)}. $

\medskip
(4) Let $t_0:=t$ and $t_{m+1}:=F(t_m)$ for $m\ge0$. Then $\Omega_{A_F}^m(M_F)\simeq N_{t_m}$ for $m\ge0$.

\medskip
Indeed, let $\pi_\lambda:A_F\ra N_\lambda$ be the
canonical surjective homomorphism, and let $\iota_\lambda:N_{F(\lambda)}\ra A_F$ be the right multiplication map by $h_\lambda$, which sends $a+A_Fh_{F(\lambda)}$ to $ah_{\lambda}$ for $a\in A_F$. Then $\iota_\lambda$ is a homomorphism of $A_F$-modules. By (3),
$\iota_\lambda$ is a
well-defined injective homomorphism with $\operatorname{Im}(\iota_\lambda)=A_Fh_\lambda=\Ker(\pi_\lambda)$.
Hence there is an exact sequence of $A_F$-modules
$$
 0\lra N_{F(\lambda)}\lraf{\iota_\lambda}A_F
 \lraf{\pi_\lambda}N_\lambda\lra0.
$$
Due to the inclusion $A_Fh_\lambda\subseteq\rad(A_F)$, we see that
$\pi_\lambda$ is a projective cover of $N_\lambda$. Hence
$\Omega_{A_F}(N_\lambda)\simeq N_{F(\lambda)}$ as $A_F$-modules, and therefore $\Omega_{A_F}^m(N_{\lambda})\simeq N_{F^{[m]}(\lambda)}$, where $F^{[m]}(\lambda)$ stands for the $m$-fold iteration of $F(\lambda)$, that is, $F^{[m]}(\lambda)=\underbrace{F(\cdots F(F(\lambda))\cdots)}_m$ for $m\ge 1$. We understand $F^{[0]}(\lambda)=\lambda$. In particular,  $\Omega_{A_F}^m(M_F)\simeq N_{t_m}$ for $m\ge0$.

\medskip
(5) We compute the stable morphism spaces $\StHom_{A_F}(N_\lambda,N_\mu)$  for $\lambda, \mu\in K$.

\medskip
We define a  homomorphism of $K$-spaces:
$$\ell_F:E\lra K, \quad r=\sum_{i=0}^nr_ix_i \mapsto \sum_{i=0}^nr_ic_i, \mbox{ where all } r_i \mbox{ lie in } K.$$
For $a\in K$, we
define another homomorphism of $K$-spaces:
$$
 \mu_a:E\lra K,
 \quad
r=\sum_{i=0}^nr_ix_i \mapsto (r)\ell_F-ar_0, \mbox{ where all } r_i \mbox{ lie in } K.
$$
Then
\begin{equation*}
 h_ar=(r)\mu_a\sum_{i=0}^{n-1}a^iw_i \, \, \mbox{ for } a\in K, r\in E.
 \tag{$\star\star$}
\end{equation*}
Since the coefficient of $w_0$ in $\sum_{i=0}^{n-1}a^iw_i$ is $1$,
the formula $(\star\star)$ shows that $h_ar=0$ if and only if
$(r)\mu_a=0$.

For $a,b\in K$,
we have $(A_Fh_a)r\subseteq W\subseteq A_Fh_b$ for $r\in E$ by (1) and (2).
Hence the right multiplication by $r$ gives rise to a homomorphism
$f_r:N_a\lra N_b, \, \overline{c} \mapsto \overline{cr}$, of $A_F$-modules, where $\bar{c}:=c + A_Fh_a$ is the coset of $c\in A_F$.

\begin{Lem}\label{stable-hom-lemma}
Let $a,b\in K$ with $a\ne b$. Then
 there is an isomorphism of $K$-spaces
$$
 \Phi_{a,b}:E/(Kh_b+\Ker(\mu_a))\lra
 \StHom_{A_F}(N_a,N_b),\; \,
 r+(Kh_b+\Ker(\mu_a))\mapsto\underline{f_r}.
$$
In particular, there are isomorphisms of $K$-spaces:
$$
 \StHom_{A_F}(N_a,N_b)\simeq
 \Coker\bigl(Kh_b\lraf{\mu_a}K\bigr)
 \simeq \begin{cases}
 K,& a=F(b),\\
 0,&\mbox{otherwise}.
 \end{cases}
$$
\end{Lem}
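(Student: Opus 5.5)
The plan is to describe $\Hom_{A_F}(N_a,N_b)$ completely by elements of $E$, to identify which maps vanish and which factor through projectives, and then to read off the dimension from the functional $\mu_a$.

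\emph{Step 1: every homomorphism is some $f_r$.} A homomorphism $N_a=A_F/A_Fh_a\to N_b$ is determined by the image $m$ of $\bar 1$, subject only to $h_am=0$ in $N_b$. Write $m=\overline{c}$ with $c=\alpha+r+w\in K1\oplus E\oplus W$.
\begin{itemize}
\item Since $EW=0$, we get $h_ac=\alpha h_a+h_ar$, and $h_ar\in W$ by (1).
\item By (2), $A_Fh_b=Kh_b\oplus W$. So the condition $h_am=0$ reads $\alpha h_a\in Kh_b\oplus W$.
\item Because $a\ne b$, the elements $h_a$ and $h_b$ are not proportional: both have $x_0$-coefficient $1$, but their $x_1$-coefficients $a$ and $b$ differ. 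Hence $\alpha=0$.
\item Since $w\in W\subseteq A_Fh_b$, we have $m=\bar r$, so the map is $f_r$.
\end{itemize}
Thus $r\mapsto f_r$ is a surjection $E\to\Hom_{A_F}(N_a,N_b)$. Its kernel is $E\cap A_Fh_b=E\cap(Kh_b\oplus W)=Kh_b$.

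\emph{Step 2: maps factoring through projectives.} By (4), $\pi_b:A_F\to N_b$ is a projective cover. Hence a map into $N_b$ factors through a projective module if and only if it factors through $\pi_b$.
\begin{itemize}
\item A homomorphism $g:N_a\to A_F$ is given by $\bar1\mapsto z$ with $h_az=0$.
\item Writing $z=\alpha+y+w$, we have $h_az=\alpha h_a+h_ay$. By $(\star\star)$, $h_ay\in W$, and $h_a\notin W$, so $h_az=0$ forces $\alpha=0$ and $h_ay=0$.
\item By $(\star\star)$ again, $h_ay=0$ means $y\in\Ker(\mu_a)$.
\item The composite $g\pi_b$ (left-to-right convention) sends $\bar1\mapsto\overline{y+w}=\bar y$, using $W\subseteq A_Fh_b$. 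So $g\pi_b=f_y$.
\item Conversely, every $y\in\Ker(\mu_a)$ gives such a $g$, defined by right multiplication by $y$.
\end{itemize}
Therefore $\underline{f_r}=0$ if and only if $f_r=f_y$ for some $y\in\Ker(\mu_a)$, that is, $r-y\in Kh_b$ by Step 1. Equivalently, $r\in Kh_b+\Ker(\mu_a)$.

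Steps 1 and 2 together show that $\Phi_{a,b}$ is well defined, surjective and injective; it is clearly $K$-linear.

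\emph{Step 3: the dimension count.} The functional $\mu_a$ is surjective, since $(x_n)\mu_a=c_n\ne0$. Hence
$$E/(Kh_b+\Ker(\mu_a))\simeq K/(Kh_b)\mu_a=\Coker\bigl(Kh_b\lraf{\mu_a}K\bigr).$$
We compute $(h_b)\mu_a=\sum_i b^ic_i-a=F(b)-a$. So the cokernel is $K$ when $a=F(b)$ and $0$ otherwise.

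The main point requiring care is Step 2: one must use that factoring through an arbitrary projective reduces to factoring through the projective cover $\pi_b$, and one must correctly identify $\Hom_{A_F}(N_a,A_F)$ via $(\star\star)$. The hypothesis $a\ne b$ is used only in Step 1, to force the scalar part $\alpha$ to vanish.
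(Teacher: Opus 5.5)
Your proposal is correct and follows the paper's proof essentially step by step. You parametrize $\Hom_{A_F}(N_a,N_b)$ by $E/Kh_b$ using $a\ne b$ to kill the scalar part, identify the projectively trivial maps as the $f_y$ with $y\in\Ker(\mu_a)$, and then read off the quotient via $\mu_a$. The only cosmetic difference is that you reduce factoring through a projective to factoring through $\pi_b$ by the lifting property, whereas the paper decomposes through $A_F^q$ and analyzes an arbitrary $v\colon A_F\to N_b$; both yield the same subspace.
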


{\it Proof.}
To construct $\Phi_{a,b}$, we consider the assignment
$$
 \begin{aligned}
 \Psi_{a,b}:E/Kh_b \lra\Hom_{A_F}(N_a,N_b),\;\;
 r+Kh_b \mapsto f_r.
 \end{aligned}
$$
This is well defined. Indeed, let $r,r'\in E$ with
$r-r'=\lambda h_b$ for some $\lambda\in K$. For
$\overline c\in N_a$ with $c\in A_F$, we have $c\lambda h_b\in A_Fh_b$, and hence
$(\overline c)f_r=\overline{cr}
=\overline{cr'+c\lambda h_b}
=\overline{cr'}=(\overline c)f_{r'}$ in $N_b$. Thus $f_r=f_{r'}$, so $\Psi_{a,b}$ is a well-defined $K$-linear
homomorphism. We show that $\Psi_{a,b}$ is an isomorphism of $K$-spaces.

To prove the surjectivity, we pick up a homomorphism $f:N_a\ra N_b$ of $A_F$-modules. Thanks to
$A_F=K1\oplus E\oplus W$ and $W\subseteq A_Fh_b$, we can write
$(\overline1_a)f=\alpha\overline1_b+\overline s$, with
$\alpha\in K$ and $s\in E$. Thus it follows from $h_a\overline1_a=0$ and $h_as\in W\subseteq A_Fh_b$ that
$
0=(h_a\overline1_a)f
=\alpha\overline{h_a}+\overline{h_as}
=\alpha\overline{h_a}.
$
By (2), $(A_Fh_b)\cap E=Kh_b$. By the assumption $a\ne b$, we have
$h_a\notin Kh_b$, and hence $\overline{h_a}\ne0$ in $N_b$.
Thus $\alpha=0$ and $f=f_s$. This shows that $\Psi_{a,b}$ is surjective.

To prove the injectivity, we suppose $f_r=0$. Then
$\overline r=(\overline1_a)f_r=0$ in $N_b$, and hence
$r\in(A_Fh_b)\cap E=Kh_b$. Thus $\Psi_{a,b}$ is injective
and hence is an isomorphism of $K$-spaces.

We now determine which homomorphisms from $N_a$ to $N_b$ factorize
through projective $A_F$-modules. Since $A_F$ is local, every finitely generated projective
$A_F$-module is free, and hence is isomorphic to $A_F^q$ for some
nonnegative integer $q$. If a homomorphism $f:N_a\to N_b$ factorizes through $A_F^q$,
then $f$ can be written as $f=\sum_{j=1}^q g_jv_j$, where $g_j: N_a\to A_F$ and $v_j: A_F\to N_b$ are homomorphisms of $A_F$-modules for all $j$. Thus it is enough to consider
$f: N_a\to N_b$ of the form $f=gv $:
$$ N_a\lraf{g}A_F\lraf{v}N_b.
$$
Write $(\overline1_a)g=\alpha+e+w$ with $\alpha\in K$, $e\in E$,
and $w\in W$. It follows from $h_a\overline1_a=0$ that
$$
 0=(h_a\overline1_a)g
 =h_a(\alpha+e+w)
 =\alpha h_a+h_ae.
$$
Here, we have used the fact $EW=0$. Then it yields from $\alpha h_a\in E$, $h_ae\in W$,
and $E\cap W=0$ that $\alpha=0$ and $h_ae=0$. Since the
coefficient of $w_0$ in $\sum_{i=0}^{n-1}a^iw_i$ is $1$, the formula
$(\star\star)$ shows that $h_ae=0$ if and only if $(e)\mu_a=0$ if and only if
$e\in\Ker(\mu_a)$. Now we write $(1)v=\beta\overline1_b+\overline u$ with $\beta\in K$ and
$u\in E$. It follows from $E^2\subseteq W\subseteq A_Fh_b$ and
$W\rad(A_F)=0$ that
$
 (\overline1_a)(gv)
 =(e+w)(\beta\overline1_b+\overline u)
 =\beta\overline e.
$
Thus $gv=f_{\beta e}$. Due to $\beta e\in\Ker(\mu_a)$, every
homomorphism which factorizes through a projective $A_F$-module is
represented by an element of $\Ker(\mu_a)$.

Conversely, let $e\in\Ker(\mu_a)$. The formula $(\star\star)$ gives
$h_ae=0$, and hence $(A_Fh_a)e=0$. Thus
$(\overline c)g_e:=ce$ defines a homomorphism
$g_e:N_a\ra A_F$ of $A_F$-modules such that $g_e\pi_b=f_e$. Therefore $f_e$ factorizes through the projective module $A_F$ for
$e\in\Ker(\mu_a)$.

Under the isomorphism $\Psi_{a,b}$, the homomorphisms which factorize
through projective modules correspond precisely to
$
 \bigl(Kh_b+\Ker(\mu_a)\bigr)/Kh_b.
$
Consequently, the following isomorphism of $K$-spaces holds:
$$
 \begin{aligned}
 \StHom_{A_F}(N_a,N_b)
 \simeq
 \frac{E/Kh_b}
 {\bigl(Kh_b+\Ker(\mu_a)\bigr)/Kh_b}\simeq E/\bigl(Kh_b+\Ker(\mu_a)\bigr).
 \end{aligned}
$$
This gives a way to define $\Phi_{a,b}$.

It remains to compute $E/(Kh_b+\Ker(\mu_a))$. Thanks to $(x_n)\mu_a=c_n\ne0$, the $K$-linear map
$\mu_a:E\ra K$ is surjective.
Moreover,
$
 (h_b)\mu_a=(h_b)\ell_F-a=F(b)-a.
$
Due to $(Kh_b)\mu_a=(F(b)-a)K$, the composition map
$$
 E\lraf{\mu_a}K\lra K/(F(b)-a)K
$$
is surjective and has kernel $Kh_b+\Ker(\mu_a)$. Hence there is an isomorphism of $K$-spaces:
$$
 \begin{aligned}
 E/(Kh_b+\Ker(\mu_a))
 &\lra K/(F(b)-a)K,\\
 r+(Kh_b+\Ker(\mu_a))
 &\mapsto (r)\mu_a+(F(b)-a)K.
 \end{aligned}
$$
Moreover,
$\Coker(Kh_b\lraf{\mu_a}K)
=K/(Kh_b)\mu_a=K/(F(b)-a)K$.
Since $K$ is a field, we have $K/(F(b)-a)K\simeq K$ if $a=F(b)$, and
$K/(F(b)-a)K=0$ otherwise.
$\square$

\medskip
(6) We compute homomorphisms between
consecutive modules.

\medskip
Let $X$ and $Y$ be $A_F$-modules, and $d>0$ a fixed integer. Since
$\Omega_{A_F}$ is an endofunctor on $\stmodcat{A_F}$,
it induces the following homomorphism of $K$-spaces for each $m\ge d$:
$$
 \theta_m:\StHom_{A_F}(\Omega_{A_F}^m(X),\Omega_{A_F}^{m-d}(Y))
 \lra
 \StHom_{A_F}(\Omega_{A_F}^{m+1}(X),\Omega_{A_F}^{m-d+1}(Y)), \; \,
 \underline f \longmapsto\underline{\Omega_{A_F}(f)}.
$$
More explicitly,  the homomorphism $\theta_m$ can be described as follows. Let
$f:\Omega_{A_F}^m(X)\ra\Omega_{A_F}^{m-d}(Y)$
be an $A_F$-homomorphism. Choose projective covers
$\pi_X:P\ra\Omega_{A_F}^m(X)$ and $\pi_Y:Q\ra\Omega_{A_F}^{m-d}(Y)$ of $\Omega_{A_F}^m(X)$ and $\Omega_{A_F}^{m-d}(Y)$,
respectively,
and let $\widetilde f:P\ra Q$ be a lift of $f$
such that $\widetilde f\pi_Y=\pi_Xf$.  Due to $\iota_X\pi_X=0$, we have
$
 \iota_X\widetilde f\pi_Y
 =\iota_X\pi_Xf=0.
$
Since $\iota_Y$ is the kernel of $\pi_Y$, there exists a unique
homomorphism
$
 \Omega_{A_F}(f):\Omega_{A_F}^{m+1}(X)
 \ra\Omega_{A_F}^{m-d+1}(Y)
$
of $A_F$-modules such that
$
 \Omega_{A_F}(f)\iota_Y=\iota_X\widetilde f.
$
Thus we have the commutative diagram:
\begin{equation*}
\vcenter{\hbox{\(
\xymatrix{
0\ar[r]&\Omega_{A_F}^{m+1}(X)\ar[r]^-{\iota_X}
 \ar[d]^-{\Omega_{A_F}(f)}
 &P\ar[r]^-{\pi_X}\ar[d]^-{\widetilde f}
 &\Omega_{A_F}^m(X)\ar[r]\ar[d]^-{f}&0\\
0\ar[r]&\Omega_{A_F}^{m-d+1}(Y)\ar[r]^-{\iota_Y}
 &Q\ar[r]^-{\pi_Y}
 &\Omega_{A_F}^{m-d}(Y)\ar[r]&0.
}
\)}}
\tag{$\ddagger$}
\end{equation*}
Although
$\Omega_{A_F}(f)$ depends on the choice of $\widetilde f$ in $\modcat{A_F}$, it is independent in $\stmodcat{A_F}$.
 Thus the image of $\underline{f}$ under $\theta_m$ is defined to be
$\underline{\Omega_{A_F}(f)}$ in $(\ddagger)$.

Now, let $X=Y=M_F$ and $d=1$. We can identify
$\Omega_{A_F}^j(M_F)$ with $N_{t_j}$ for $j\ge0$ by (4). Then the homomorphism
$\theta_m$ takes the following form for $m\ge1$:
$$
 \theta_m:\StHom_{A_F}(N_{t_m},N_{t_{m-1}})
 \lra
 \StHom_{A_F}(N_{t_{m+1}},N_{t_m}),
 \;  \underline f\mapsto\underline{\Omega_{A_F}(f)}.
$$

\begin{Lem}\label{stable-system-lemma}
For $m\ge d\ge1$, we have $$ \dim_K\StHom_{A_F}(N_{t_m},N_{t_{m-d}})=
 \begin{cases}1,& d=1,\\ 0,&d\ge2.\end{cases}$$
Particularly, for $m\ge d=1$, let
$$ \gamma_m:=-\frac{(t_m-c_0)F'(t_m)}{F(t_m)-c_0}\in K,
$$
where $F'(T)$ denotes the derivative of $F(T)\in K[T]$. Then there is a $K$-basis element $\underline{f_m}$ of $\StHom_{A_F}(N_{t_m},N_{t_{m-1}})$ such that $(c\underline{f_{m}})\theta_m
=c\gamma_m\underline{f_{m+1}}$ for all $c\in K$.
\end{Lem}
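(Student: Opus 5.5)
We apply Lemma~\ref{stable-hom-lemma} with $a=t_m$ and $b=t_{m-d}$, and then compute $\theta_m$ by lifting right multiplications to $A_F\to A_F$.

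\emph{Step 1: the $t_j$ are distinct.} Since $t$ is transcendental over $k$ and $\deg F=n\ge2$, the element $t_j=F^{[j]}(t)$ is a polynomial in $t$ with coefficients in $k$ of degree $n^j$. Hence $t_j\ne t_l$ for $j\ne l$, and $t_j\notin k$ for every $j\ge0$; in particular $t_j\ne c_0$.

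\emph{Step 2: the dimension formula.} For $m\ge d\ge1$ we have $t_m\ne t_{m-d}$, so Lemma~\ref{stable-hom-lemma} applies. It shows that $\StHom_{A_F}(N_{t_m},N_{t_{m-d}})$ is one-dimensional precisely when $t_m=F(t_{m-d})=t_{m-d+1}$, and is zero otherwise. By Step 1, $t_m=t_{m-d+1}$ holds iff $d=1$.

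\emph{Step 3: choice of basis.} For $d=1$ we take $f_m:=f_{x_0}\colon N_{t_m}\to N_{t_{m-1}}$, which is right multiplication by $x_0$. Under $\Phi_{t_m,t_{m-1}}$, the class $\underline{f_r}$ corresponds to $(r)\mu_{t_m}\in K$, since here $F(b)-a=0$. We have $(x_0)\mu_{t_m}=c_0-t_m\ne0$, so $\underline{f_m}$ is a basis element.

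\emph{Step 4: computing $\theta_m$.} Write $a=t_m$ and $b=t_{m-1}$, so that $F(b)=a$. In $(\ddagger)$ we use the projective covers $\pi_a$ and $\pi_b$ from (4), and we lift $f_r$ to $\widetilde f\colon A_F\to A_F$, the right multiplication by $r$. Then $\Omega_{A_F}(f_r)\colon N_{F(a)}\to N_{F(b)}=N_a$ is determined by the identity $(\bar c)\Omega_{A_F}(f_r)\,\iota_b=c\,h_a r$.

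By $(\star\star)$, $h_a r=(r)\mu_a\sum_{i}a^iw_i$. We look for $s\in E$ with $s h_b=\sum_i a^iw_i$. By $(\star)$ this amounts to the recursion $a s_i-s_{i+1}=a^i$ for $0\le i<n$. One checks directly that $s_i:=-i\,a^{i-1}$ (with $s_0=0$) solves it. We then set $s':=(r)\mu_a\, s$. The map $\bar c\mapsto \overline{cs'}$ is exactly $f_{s'}\colon N_{F(a)}\to N_a$; it is well defined by (5). It satisfies $f_{s'}\iota_b=\iota_a\widetilde f$, because $c s' h_b=c\,h_a r$. Hence $\Omega_{A_F}(f_r)=f_{s'}$.

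Its class is recorded by $(s')\mu_{F(a)}$, and
$$(s)\mu_{F(a)}=\sum_i c_i s_i-F(a)s_0=-\sum_i i\,c_i a^{i-1}=-F'(a).$$
This value does not depend on the choice of $s$, since $(h_b)\mu_{F(a)}=F(b)-F(a)\cdot 1$ adjusts consistently; this is the ambiguity $Kh_b$ already present in Lemma~\ref{stable-hom-lemma}. Taking $r=x_0$ gives
$$(\Omega_{A_F}(f_m))\mu_{t_{m+1}}=-(c_0-t_m)F'(t_m).$$
On the other hand, $(x_0)\mu_{t_{m+1}}=c_0-F(t_m)$, which is nonzero by Step 1. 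Comparing the two values under $\Phi_{t_{m+1},t_m}$ gives $\underline{f_m}\theta_m=\gamma_m\underline{f_{m+1}}$. $K$-linearity of $\theta_m$ then gives the statement for $c\underline{f_m}$.

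The main obstacle is Step 4. There we must justify that the explicit lift (right multiplication by $r$) together with a solution $s$ of the recursion really produces $\Omega_{A_F}(f_m)$ in $\stmodcat{A_F}$. We must also keep the left-to-right composition conventions straight so that the scalar comes out as $\gamma_m$ and not its inverse.
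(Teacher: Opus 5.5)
Your proof is correct and follows the paper's route. You use Lemma~\ref{stable-hom-lemma} together with the distinctness of the $t_j$ for the dimensions, take $f_{x_0}$ as the basis element, lift by right multiplication, find $s$ with $s h_{t_{m-1}}$ matching $h_{t_m}x_0$ (your $(x_0)\mu_{t_m}\cdot s$ is exactly the paper's $s$), and compare $\mu_{t_{m+1}}$-values. The one slip is a harmless, unnecessary aside: solutions of $s h_b=\sum_i a^i w_i$ differ by elements of $K h_{F(b)}=K h_a$, not $Kh_b$, and $(h_a)\mu_{F(a)}=0$, whereas $(h_b)\mu_{F(a)}=a-F(a)\neq 0$; in any case $\Omega_{A_F}(f_r)=f_{s'}$ is already forced by the injectivity of $\iota_b$, and $\Phi_{t_{m+1},t_m}$ is defined on classes.
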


{\it Proof.} Note that $\gamma_m$ is well defined since the transcendentality of $t\in K$ over $k$ implies that $F(t_m)-c_0\ne 0.$

Fix $m\ge d\ge1$. As a polynomial in $t$, the
element $t_i$ has degree $n^i$ for integer $i\ge0$.
Hence $t_m\ne t_{m-d}$. If $d\ge2$, then it follows from $F(t_{m-1})=t_m$
that
$F(t_{m-d})=t_{m-d+1}\ne t_m$. Thus
Lemma~\ref{stable-hom-lemma} shows
$$
 \dim_K\StHom_{A_F}(N_{t_m},N_{t_{m-d}})
 =
 \begin{cases}
 1,&d=1,\\
 0,&d\ge2.
 \end{cases}
$$

We now consider the case $d=1$. For $m\ge1$, let
$f_m:N_{t_m}\ra N_{t_{m-1}}$ be the homomorphism $f_{x_0}$.
We show $\underline{f_{x_0}}\ne 0$ in
$\StHom_{A_F}(N_{t_m},N_{t_{m-1}})$. Indeed, under the isomorphism
$\Phi_{t_m,t_{m-1}}$ in Lemma~\ref{stable-hom-lemma}, the image of
$x_0+ (Kh_{t_{m-1}}+\Ker(\mu_{t_m}))$ under $\Phi_{t_m,t_{m-1}}$ is $\underline{f_{x_0}}$. Thus it suffices to prove $x_0\notin Kh_{t_{m-1}}+\Ker(\mu_{t_m})$. However, by the definition of
$\mu_{t_m}$ in (5), we have
$(h_{t_{m-1}})\mu_{t_m}=F(t_{m-1})-t_m=0$, whereas
$(x_0)\mu_{t_m}=c_0-t_m\ne0$. Hence
$Kh_{t_{m-1}}\subseteq\Ker(\mu_{t_m})$ and
$x_0\notin Kh_{t_{m-1}}+\Ker(\mu_{t_m})=\Ker(\mu_{t_m})$. Thus $\underline{f_{x_0}}\ne0$.

Similarly, as done in $(\ddagger)$, we get $(\underline{f_m})\theta_m
 =\underline{\Omega_{A_F}(f_m)}$, which is induced from
the following commutative diagram of $A_F$-modules:
$$\xymatrix{
0\ar[r]&N_{t_{m+1}}\ar[r]^-{\iota_{t_m}}
 \ar[d]^-{\Omega_{A_F}(f_m)}
 &A_F\ar[r]^-{\pi_{t_m}}\ar[d]^-{\rho_{x_0}}
 &N_{t_m}\ar[r]\ar[d]^-{f_m}&0\\
0\ar[r]&N_{t_m}\ar[r]^-{\iota_{t_{m-1}}}
 &A_F\ar[r]^-{\pi_{t_{m-1}}}
 &N_{t_{m-1}}\ar[r]&0,
}$$
where $\rho_{x_0}:A_F\ra A_F$ is the right multiplication map by $x_0$.

\medskip
We next calculate $\Omega_{A_F}(f_m)$. Let
$\bar{a}=a+A_Fh_{t_{m+1}} \in N_{t_{m+1}}=A_F/A_Fh_{t_{m+1}}$ with $a\in A_F$. The commutativity of the left
square in the above diagram shows
$ (\bar{a})
 \bigl(\Omega_{A_F}(f_m)\iota_{t_{m-1}}\bigr)
 =(\bar{a})(\iota_{t_m}\rho_{x_0})
 =a h_{t_m}x_0.$
By $(\star\star)$ and the definition of $\mu_{t_m}$ in
(5), we have
$h_{t_m}x_0
 =(x_0)\mu_{t_m}\big(\sum_{i=0}^{n-1}t_m^iw_i\big)
 =-(t_m-c_0)\big(\sum_{i=0}^{n-1}t_m^iw_i\big).$
Set $s:=(t_m-c_0)\big(\sum_{i=1}^n it_m^{i-1}x_i\big)\in E.$ It follows from $F(t_{m-1})=t_m$
 and $(\star)$ (with $\lambda=t_{m-1}$) that
$$
 s h_{t_{m-1}}
 =(t_m-c_0)\left(-w_0+
 \sum_{i=1}^{n-1}(i-(i+1))t_m^iw_i\right)
 =-(t_m-c_0)\big(\sum_{i=0}^{n-1}t_m^iw_i\big).
$$
Consequently, $ (\bar{a})
 \bigl(\Omega_{A_F}(f_m)\iota_{t_{m-1}}\bigr)
 =a h_{t_m}x_0
 =a s h_{t_{m-1}}
 =(\bar{a})(f_s\iota_{t_{m-1}}).
$
Since $\iota_{t_{m-1}}$ is injective, it follows that
$\Omega_{A_F}(f_m)=f_s$. Therefore $(\underline{f_m})\theta_m=\underline{f_s}.$

Since $\underline{f_{m+1}}$ is a $K$-basis element of
$\StHom_{A_F}(N_{t_{m+1}},N_{t_m})$, there is a unique
$\alpha_m\in K$ such that
$
 \underline{f_s}=\alpha_m\underline{f_{m+1}}.
$
By Lemma~\ref{stable-hom-lemma}, there is an isomorphism of $K$-spaces
$$
 \Phi_{t_{m+1},t_m}:
 E/(Kh_{t_m}+\Ker(\mu_{t_{m+1}}))
 \lra\StHom_{A_F}(N_{t_{m+1}},N_{t_m}),\;\, \;
 r+(Kh_{t_m}+\Ker(\mu_{t_{m+1}}))
 \mapsto\underline{f_r}.
$$
Under this isomorphism, $x_0$ corresponds to
$\underline{f_{m+1}}$. Hence it follows from
$\underline{f_s}=\alpha_m\underline{f_{m+1}}$ that
$
 s-\alpha_mx_0\in
 Kh_{t_m}+\Ker(\mu_{t_{m+1}}).
$
Since the equality $(h_{t_m})\mu_{t_{m+1}}=F(t_m)-t_{m+1}=0$ holds, we have
$Kh_{t_m}\subseteq\Ker(\mu_{t_{m+1}})$. Thus
$s-\alpha_mx_0\in\Ker(\mu_{t_{m+1}})$ and
$ (s)\mu_{t_{m+1}}
 =\alpha_m(x_0)\mu_{t_{m+1}}.
$
By the definitions of $\ell_F$ and $\mu_{t_{m+1}}$ in (5), we have
$(s)\mu_{t_{m+1}}=(t_m-c_0)F'(t_m)$ and
$(x_0)\mu_{t_{m+1}}=c_0-F(t_m)$. Therefore
$$
 \alpha_m=\frac{(s)\mu_{t_{m+1}}}{(x_0)\mu_{t_{m+1}}}
 =-\frac{(t_m-c_0)F'(t_m)}{F(t_m)-c_0}
 =\gamma_m.
$$
Thus
$(\underline{f_m})\theta_m
=\gamma_m\underline{f_{m+1}}$. Since $\theta_m$ is $K$-linear, we have
$(c\underline{f_m})\theta_m
=c\gamma_m\underline{f_{m+1}}$
for all $c\in K$.
$\square$

\medskip
Now we are ready to prove the main result.

\medskip
{\bf Proof of Theorem \ref{thm-main}.}
Let $d>0$ be a fixed integer, and let $X$ and $Y$ be $A_F$-modules. We have defined the following $K$-linear map for $m\ge d$:
$$
 \theta_m:\StHom_{A_F}(\Omega_{A_F}^m(X),\Omega_{A_F}^{m-d}(Y))
 \lra
 \StHom_{A_F}(\Omega_{A_F}^{m+1}(X),\Omega_{A_F}^{m-d+1}(Y)), \;
 \underline f \mapsto\underline{\Omega_{A_F}(f)}.
$$

For $r\ge m$, by the successive applications of $\theta_m$, we have a
homomorphism
$$
\begin{aligned}
 \theta_{m,r}:\StHom_{A_F}(\Omega_{A_F}^m(X),\Omega_{A_F}^{m-d}(Y))
 &\lra
 \StHom_{A_F}(\Omega_{A_F}^r(X),\Omega_{A_F}^{r-d}(Y)),\;
 \underline f \mapsto
 \underline{\Omega_{A_F}^{r-m}(f)}.
\end{aligned}
$$
The functoriality of $\Omega_{A_F}$ implies that $\theta_{m,m}=1$ and
$\theta_{m,r}\theta_{r,s}=\theta_{m,s}$ for $d\le m\le r\le s$.
Therefore, for each fixed integer $d>0$, the datum
$(\{\StHom_{A_F}
(\Omega_{A_F}^m(X),\Omega_{A_F}^{m-d}(Y))\}_{m\ge d},
\{\theta_{m,r}\}_{d\le m\le r})$ is a direct system
of $K$-spaces.

By
\cite[p. 809]{Kvamme2021}, there is a natural $K$-linear isomorphism of $K$-spaces:
$$
 \Hom_{\Dsg{A_F}}(X,Y[d])\simeq
 \varinjlim_{m\ge d}\StHom_{A_F}(\Omega_{A_F}^m(X),\Omega_{A_F}^{m-d}(Y))
 \qquad(d>0).
$$
We take $X=Y=M_F$. Since
$\Omega_{A_F}^m(M_F)\simeq N_{t_m}$ for every $m\ge0$,
the above isomorphism gives
\begin{equation*}
 \Hom_{\Dsg{A_F}}(M_F,M_F[d])\simeq
 \varinjlim_{m\ge d}
 \StHom_{A_F}(N_{t_m},N_{t_{m-d}}). \tag{$\ast$}
\end{equation*}
By Lemma \ref{stable-system-lemma}, $\StHom_{A_F}(N_{t_m},N_{t_{m-d}})=0$ if $d\ge2$, and  $\dim_K\StHom_{A_F}(N_{t_m},N_{t_{m-1}})=1$ if $d=1$.
Suppose $d=1$. If $F'=0$, then $\gamma_m=0$ for $m\ge1$.
It follows that the direct system in $(\ast)$ is isomorphic to
$$
 K\lraf{0}K\lraf{0}K\lraf{0}\cdots,
$$
whose direct limit is zero.
Since $t_m$ is a nonconstant
polynomial in the transcendental element $t$, it is
transcendental over $k$, and therefore $t_m-c_0\ne0$. Suppose $F'\ne0$. Then $F'(t_m)\ne0$ and
$\gamma_m\ne0$ for $m\ge 1$. It follows that
$\theta_m$ is an isomorphism for $m\ge 1$, and the direct system in $(\ast)$
is isomorphic to
$$
 K\lraf{\sim}K\lraf{\sim}K\lraf{\sim}\cdots,
$$
whose direct limit is isomorphic to $K$.
Hence $M_F$ is presilting if and
only if $F'=0$. As $M_F$ is not a projective $A_F$-module, it has infinite projective dimension, and therefore $0\ne M_F$ in $\Dsg{A_F}$. Thus $M_F$ is a nonzero presilting object
in $\Dsg{A_F}$ if and only if $F'=0$.
$\square$

\section*{Acknowledgements}

The research was partially supported by the National Natural Science
Foundation of China (Grants 12671048 and 12401038). For calculations of the counterexample, we asked for assistance from OpenAI GPT-5.6 Sol.

{\footnotesize

Changchang Xi, School of Mathematical Sciences, Capital Normal
University, 100048 Beijing, P. R. China

{\tt Email: xicc@cnu.edu.cn (C. C. Xi)}

\medskip
Jinbi Zhang, School of Mathematical Sciences, Anhui University,
230601 Hefei, P. R. China

{\tt Email: zhangjb@ahu.edu.cn (J. B. Zhang)}
}


\begin{thebibliography}{99}
\bibitem{AuslanderReiten1975}{
{\sc M. Auslander} and {\sc I. Reiten}, On a generalized version of the Nakayama conjecture, \emph{Proc. Amer. Math.
Soc.} \textbf{52} (1975) 69-74.}

\bibitem{buchweitz87}{{\sc R.-O. Buchweitz}, Maximal Cohen-Macaulay modules and Tate cohomology over Gorenstein rings, Mathematical Surveys and Monographs, vol. 262, American Mathematical Society, Providence, RI, 2021.}


\bibitem{ChenLiZhangZhao2024}{
{\sc X.-W. Chen}, {\sc Z.-W. Li}, {\sc X. J. Zhang} and {\sc Z. B. Zhao},
A non-vanishing result on the singularity category,
\emph{Proc. Amer. Math. Soc.} \textbf{152} (2024) 3765-3776.}

\bibitem{ChenHuQinWang2023}{
{\sc Y. P. Chen}, {\sc W. Hu}, {\sc Y. Y. Qin} and {\sc R. Wang},
Singular equivalences and Auslander--Reiten conjecture,
\emph{J. Algebra} \textbf{623} (2023) 42-63.}


\bibitem{Kvamme2021}{
{\sc S. Kvamme},
$d\mathbb Z$-cluster tilting subcategories of singularity categories,
\emph{Math. Z.} \textbf{297} (2021) 803-825.}

\bibitem{orlov04}{{\sc D. O. Orlov}, Triangulated categories of singularities and D-branes in Landau-Ginzburg models, \emph{Proc. Steklov Inst. Math.} \textbf{246}(3) (2004) 227-248.}

\bibitem{Xu2013}{
{\sc D. M. Xu}, A note on the Auslander--Reiten conjecture,
\emph{Acta Math. Sin. (Engl. Ser.)} \textbf{29} (2013) 1993-1996.}


\end{thebibliography}
\end{document}